\title{Improving a Constant in High-Dimensional Discrepancy Estimates}
\author{Hendrik Pasing \and Christian Weiß}
\date{\today}
\newtheorem{thm}{Theorem}[section]
\newtheorem{lem}[thm]{Lemma}
\newtheorem{prop}[thm]{Proposition}
\begin{document} 

\maketitle

\begin{abstract}
	For all $s \geq 1$ and $N \geq 1$ there exist sequences $(z_1,\ldots,z_N)$ in $[0,1]^s$ such that the star-discrepancy of these points can be bounded by
	$$D_N^*(z_1,\ldots,z_N) \leq c \frac{\sqrt{s}}{\sqrt{N}}.$$
	The best known value for the constant is $c=10$ as has been calculated by Aistleitner in \cite{Ais11}. In this paper we improve the bound to $c=9$. 
\end{abstract}

\section{Introduction}
When Quasi-Monte Carlo methods are applied in practice to answer financial mathematical questions, the occuring problems frequently involve to explicilty or implicitly calculate integrals. Often the arithmetic mean of some function evaluations $f(z_1),\ldots,f(z_N)$ is taken as an approximation of the integral under consideration. A theoretic justification for this approach is the Koksma-Hlawka inequality which states that the difference between the arithmetic mean and the integral of a function $f$ over the $s$-dimensional unit cube is bounded by the product of the total variation of $f$ in the sense of Hardy-Krause and the so-called star-discrepancy $D_N^*(z_1,\ldots,z_N)$.\\[12pt] 
Because the problems occuring in practice are in addition typically high-dimensional ($s \gg 0$) and function evaluation is expensive ($N \ll \infty$), see e.g. \cite{BFW14}, \cite{KNK18}, classical low-discrepancy sequences which satisfy the inequality
$$D^*_N(z_1,\ldots,z_N) \leq c \frac{(\log N)^{s-1}}{N}$$
are of limited use. This observation is known as the \textit{coarse of dimensionality}, compare e.g. \cite{Nie92}, Chapter~1, \cite{Slo09}. Instead, it is hence desireable to construct sequences which have a small star-discrepancy if $N$ is small in comparison to $s$.\\[12pt]
In \cite{HNWW01} it was shown that for every $s\geq 1$ and $N \geq 1$, there exists a finite sequence $(z_1,...,z_N)$ of elements of the $s$-dimensional unit cube such that the star-discrepancy of this sequence satisfies
 $$D_N^*(z_1,...,z_N) \leq c \frac{\sqrt{s}}{\sqrt{N}}$$
for some constant $c$ independent of $s$ and $N$. However, no concrete value for $c$ was calculated in this paper. In \cite{Ais11}, a new proof of the result was given including the explicit upper bound $c=10$. In this paper, we improve the upper constant to $c=9$. More precisely, we show:
\begin{thm} \label{thm:main_thm} For any $s \geq 1$ and $N \geq 1$, there exists a sequence $(z_1,\ldots,z_n)$ of elements of the $s$-dimensional unit cube such that
	\begin{align} \label{inequ:main}
D_N^*(z_1,\ldots,z_N) < 9\frac{\sqrt{s}}{\sqrt{N}}. 
\end{align}
\end{thm}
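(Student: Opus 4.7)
The plan is to establish the existence statement probabilistically, following the general scheme of Aistleitner \cite{Ais11} but tightening every numerical constant along the way. Concretely, I would let $z_1,\ldots,z_N$ be independent uniformly distributed random variables in $[0,1]^s$ and show that with strictly positive probability the event $D_N^*(z_1,\ldots,z_N) < 9\sqrt{s}/\sqrt{N}$ occurs. Since $D_N^*$ is a supremum over the uncountable family of anchored boxes $[0,x]$ with $x \in [0,1]^s$, the first ingredient is a $\delta$-bracketing cover: a finite family of pairs $(\alpha,\beta)$ with $\alpha \leq \beta$ coordinatewise, satisfying $\prod \beta_j - \prod \alpha_j \leq \delta$, and such that every $[0,x]$ is sandwiched between some bracket. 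A standard construction uses a one-dimensional grid $\{0=t_0 < t_1 < \ldots < t_M = 1\}$ on each axis, chosen so that the $s$-fold product yields brackets of volume gap at most $\delta$.

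Next, rather than apply a single bracketing at one scale, I would build a dyadic chain of bracketings at refinements $\delta_k = 2^{-k}\delta_0$ for $k=0,1,2,\ldots,K$, where $K$ is chosen so that $\delta_K$ is of the order $1/N$. For each random point configuration, the deviation of the empirical measure from the Lebesgue measure on an arbitrary $[0,x]$ is bounded by telescoping through the chain. At each scale, Bernstein's inequality applied to the Bernoulli sum $\sum \mathbf{1}_{z_i \in [0,\beta]\setminus[0,\alpha]}$ gives a tail bound with variance proxy $\delta_k$. I would then union-bound over all brackets at all scales: the cardinality at scale $k$ is polynomial in $\delta_k^{-1}$ to the power $s$, contributing a factor $e^{cs\log(1/\delta_k)}$, while the Bernstein exponent is of order $t_k^2/(N\delta_k)$ for a per-level threshold $t_k$.

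The whole argument collapses to verifying a single inequality in which one must choose the sequences $(\delta_k)$ and $(t_k)$, the base $\delta_0$, and the allocation of the failure probability across scales, so that $\sum t_k < 9\sqrt{s/N}$ while the total failure probability is strictly less than one. Improving the constant from $10$ to $9$ will, I expect, come from three places that I would examine in turn: (i) a sharper combinatorial count of the bracketing cardinality exploiting that coordinates near $0$ and near $1$ are wasteful, as in Gn\"ewuch's refinements; (ii) the variance-aware form of Bernstein's inequality with its cleaner leading constant $\sqrt{2}$ in the main term; (iii) a careful numerical optimization over the geometric ratio between consecutive $\delta_k$'s and the split of the threshold budget, which in \cite{Ais11} was handled with some slack.

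The main obstacle is precisely step (iii): the bound on $c$ is not asymptotic but must hold for all $s \geq 1$ and all $N \geq 1$ simultaneously, so the optimization has to be done with explicit, verifiable constants rather than asymptotic symbols. I would isolate a "main regime" in which $N$ is at least a modest function of $s$, where the probabilistic argument sharpens neatly to $c=9$, and dispose of the remaining regimes (small $N$ or small $s$) by ad hoc constructions or the trivial estimate $D_N^* \leq 1$, which already beats $9\sqrt{s/N}$ whenever $N \leq 81 s$ is false in the right direction. Gluing these two regimes requires checking that the transition point in $N$ lies on the correct side, which is where the numerical bookkeeping becomes delicate.
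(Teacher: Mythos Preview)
Your plan is correct and matches the paper's approach almost exactly: probabilistic existence via i.i.d.\ uniform points, dyadic chaining through a nested family of $\delta$-bracketing covers, Bernstein/Hoeffding tail bounds at each scale, a union bound producing per-level constants $c_k$ whose sum must be pushed below $8$, together with the trivial bound for $N\le 81s$ and explicit low-dimensional constructions for $s=1,2$. The paper's actual gain from $10$ to $9$ comes entirely from your point (i)---a sharpening of Gnewuch's bracketing-number bound from $2^{s-1}\tfrac{s^s}{s!}(\delta^{-1}+1)^s$ to $2^{s-2}\tfrac{s^s}{s!}(\delta^{-1}+1)^s+\tfrac12(\delta^{-1}+1)$---rather than from (ii) or (iii), so your expectation that the optimization in (iii) carries residual slack is not where the improvement lives.
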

An improvement of the constant $c$ is of important practical use: for bounding the discrepancy of a sequence $(z_1,\ldots,z_N)$ deterministically by $1$ we need a sequence $(z_1,\ldots,z_N)$ of length $N > c^2 s$. So $N$ depends on $c^2$ which means it is a matter of interest to find the best possible theoretical value of $c$. Our work is a contribution to this aim.\\[12pt] 
Our proof closely follows the one presented in \cite{Ais11}. As was already mentioned therein, an improvement of Gnewuch's upper bound for the smallest cardinality of a $\delta$-cover, Theorem~\ref{thm:gnewuchalt}, should result in a better value for $c$. Indeed, in Proposition~\ref{prop:improvement_gnewuch} we are able to improve Gnewuch's result. Afterwards, we only need to slightly amend Aistleitner's proof for the new upper bound and find a lower value for $c$.\\[12pt] 
Finally, it should be mentioned, that the rate of convergence $\sqrt{s/N}$ is in some sense best possible: it was shown in \cite{Doe11} by Doerr that for a random set of independent, uniformly distributed points $(z_1,\ldots,z_N)$ the inequality
$$E(D_N^*(z_1,...,z_N)) \geq \widetilde{c} \frac{\sqrt{s}}{\sqrt{N}}$$
for the expected value holds. 

%
%

 \section{Proof of the main results}
 
Before we come to the proof of Theorem~\ref{thm:main_thm} we collect some of the necessary background information. 

\paragraph{Discrepancy.} Let $Z=(z_n)_{n \geq 0}$ be a sequence in $[0,1)^s$. Then the \textbf{star-discrepancy} of the first $N$ points of the sequence is defined by
$$D^*_N(Z) := \sup_{B \subset [0,1)^d} \left| \frac{A_N(B)}{N} - \lambda_s(B) \right|,$$
where the supremum is taken over all intervals $B = [0,a_1) \times [0,a_2) \times \ldots \times [0,a_s) \subset [0,1)^s$ and $A_N(B) :=  |\left\{ n \ \mid \ 0 \leq n < N, z_n \in B \right\}|$ and $\lambda_s$ denotes the $s$-dimensional Lebesgue-measure. If $D^*_N(Z)$ satisfies
$$D_N(Z) = O(N^{-1}(\log N)^{s-1})$$
then $Z$ is called a \textbf{low-discrepancy sequence}. For more details we refer the reader to \cite{Nie92}.
\paragraph{$\delta$-bracketing Covers.} In this paper we will use the notation of $\delta$-bracketing covers which was introduced in \cite{Gne08}: let  $\mathcal{F} \subset L^1([0,1]^s)$ be a subset of the real valued Lebesgue integrable functions. For $0 < \delta \leq 1$ and $f, g \in \mathcal{F}$ with
$$\int_{[0,1]^s} (g(x) - f(x)) \mathrm{d} x \leq \delta.$$ 
we call the set
$$[f,g]_{\mathcal{F}} := \left\{ h \in \mathcal{F} \ | \ f \leq h \leq g \right\}$$
a \textbf{$\delta$-bracket of $\mathcal{F}$}. A finite subset $\Gamma \subset \mathcal{F}$ is called a \textbf{$\delta$-cover of $\mathcal{F}$}, if for every $h \in \mathcal{F}$, there exists $f,g \in \Gamma$ with $h \in [f,g]_\mathcal{F}$. A \textbf{$\delta$-bracketing cover of $\mathcal{F}$} is a set of $\delta$-brackets whose union is $\mathcal{F}$. The number $\mathcal{N}(\mathcal{F},d)$ denotes the smallest cardinality of a $\delta$-cover of $\mathcal{F}$, i.e.
$$\mathcal{N}(\mathcal{F},d) := \min \left\{ |\Gamma| \ | \ \textrm{$\Gamma$ is a $\delta$-cover} \right\}.$$
Similarly, $N_{[\ ]}(\mathcal{F},\delta)$ denotes the smallest cardinality of a $\delta$-bracketing cover. In the following we will restrict to the specific subset of $\mathcal{F}$ which consists of all indicator function of the form $\mathbb{1}_{[0,x)}$ for some $x < 1$ and use the notation $\mathcal{N}(s,d)$ and $N_{[\ ]}(s,\delta)$ in this case. 
\paragraph{Gnewuch's inequality.} In \cite{Gne08}, Gnewuch proved the following inequality for $N_{[\ ]}(s,\delta)$.
\begin{thm}[\cite{Gne08}, Theorem 1.15] \label{thm:gnewuchalt} Let $s \in \mathbb{N}$ and $0 < \delta \leq 1$. Then
	\begin{align} \label{eq:gnewuch}
	N_{[\ ]}(s,\delta) \leq 2^{s-1} \left( \frac{s^s}{s!}\right) \left( \delta^{-1}+1\right)^s.
	\end{align}
\end{thm}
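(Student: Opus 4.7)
My plan is to construct an explicit $\delta$-bracketing cover of the family $\mathcal{F}_s := \{\mathbb{1}_{[0,x)} : x \in [0,1]^s\}$ and then bound its cardinality carefully. The first step is an analytic reduction: for any $x \leq y$ componentwise in $[0,1]^s$, a telescoping identity yields
$$\int_{[0,1]^s} \bigl(\mathbb{1}_{[0,y)}(t) - \mathbb{1}_{[0,x)}(t)\bigr)\,dt = \prod_{i=1}^s y_i - \prod_{i=1}^s x_i = \sum_{i=1}^s \Bigl(\prod_{j<i} x_j\Bigr)(y_i - x_i)\Bigl(\prod_{j>i} y_j\Bigr) \leq \sum_{i=1}^s (y_i - x_i),$$
since all coordinates lie in $[0,1]$. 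Hence any axis-aligned box $[x,y] \subset [0,1]^s$ whose edge-lengths sum to at most $\delta$ produces a legitimate $\delta$-bracket $[\mathbb{1}_{[0,x)}, \mathbb{1}_{[0,y)}]$ that captures every $\mathbb{1}_{[0,z)}$ with $x \leq z \leq y$.

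The second step is to tile $[0,1]^s$ with such pairs economically. A uniform grid of mesh $\delta/s$ yields roughly $(s/\delta)^s$ cells, which overshoots the target by a factor of about $s!/2^{s-1}$. The improvement would come from distributing the edge-length budget $\delta$ non-uniformly across coordinates: one indexes boxes by integer tuples $\mathbf{n} = (n_1,\ldots,n_s) \in \mathbb{Z}_{\geq 0}^s$ lying inside a simplex $\{\mathbf{n} : \sum_i n_i \leq M\}$ for an $M$ of order $s(\delta^{-1}+1)$, rather than inside a cube. The key combinatorial input is then
$$|\{\mathbf{n} \in \mathbb{Z}_{\geq 0}^s : n_1 + \cdots + n_s \leq M\}| = \binom{s+M}{s},$$
which, for $M \approx s/\delta$, expands via $(s+M)^s/s!$ to $(s^s/s!)(\delta^{-1}+1)^s$ up to lower-order terms, giving the combinatorial factor $s^s/s!$.

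The residual factor $2^{s-1}$ should be absorbed by splitting the construction according to whether each of $s-1$ designated coordinates falls below or above a fixed threshold (for example $\tfrac{1}{2}$), which duplicates the simplex count in a controlled way across orthants. I would set up the simplex-indexed grid first and then apply this threshold split, so that the final bracket count becomes $2^{s-1}$ copies of the simplex count.

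The main obstacle I foresee lies in calibrating the simplex bound $M$ and the threshold precisely, so that simultaneously every anchored box $[0,x)$ is captured by at least one bracket with edge-length sum at most $\delta$ and the final count matches $2^{s-1}(s^s/s!)(\delta^{-1}+1)^s$ without slack. A clean route is induction on $s$, peeling off one coordinate at a time, invoking the one-dimensional estimate $N_{[\ ]}(1,\delta) \leq \delta^{-1}+1$, and using the reduction above to convert a $\delta$-bracket in dimension $s-1$ together with a one-dimensional $\delta$-bracket into an $s$-dimensional one; the inductive step must distribute the edge-length budget unevenly among the $s$ coordinates, and it is precisely the bookkeeping of this redistribution that forces the simplex-type count to appear.
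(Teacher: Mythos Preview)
The paper does not give its own proof of this statement: Theorem~\ref{thm:gnewuchalt} is quoted verbatim as Theorem~1.15 of \cite{Gne08} and serves only as background. What the paper actually proves is the sharper Proposition~\ref{prop:improvement_gnewuch}, and it does so not by reconstructing Gnewuch's covering but by taking the intermediate summation inequality of Lemma~\ref{lemma_gnewuch} (also borrowed from \cite{Gne08}) as a black box and massaging that sum with the binomial theorem. So there is no in-paper argument to compare your construction against.

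That said, your plan has a genuine gap worth flagging. Your first step replaces the volume difference $\prod_i y_i - \prod_i x_i$ by the additive upper bound $\sum_i (y_i - x_i)$. Once you commit to covering $[0,1]^s$ by brackets satisfying $\sum_i (y_i - x_i) \leq \delta$, the AM--GM inequality forces every such bracket to have volume at most $(\delta/s)^s$, so any covering needs at least $(s/\delta)^s$ brackets. This is exactly the uniform-grid count you were trying to beat, and no amount of ``non-uniform distribution of the edge-length budget'' can help: the constraint is already linear and symmetric in the edge lengths. Gnewuch's actual saving comes from keeping the multiplicative volume $\prod_i y_i - \prod_i x_i$ and exploiting that brackets near coordinate faces (where some $x_j$ is small) have small volume even when other edges are long; it is this multiplicative structure, not an additive one, that produces the simplex-type count $\binom{s+M}{s}$. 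Your inductive peel-off-one-coordinate idea and the $2^{s-1}$ threshold split are likewise disconnected from the additive reduction and would need to be rebuilt on top of the correct volume constraint.
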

We will focus here on an intermediate result of Gnewuch which he derived during the proof of Theorem~\ref{thm:gnewuchalt} and state it as a lemma. Afterwards we will show that it can be used to strengthen inequality~\eqref{eq:gnewuch}.
\begin{lem}\label{lemma_gnewuch} Let $s \in \mathbb{N}$ and $0 < \delta \leq 1$. Then
$$N_{[\ ]}(s,\delta) \leq \sum\limits_{k=0}^{s-2} {s \choose k+1} 2^{s-k-2} \frac{s^s}{(s-k)!} \left( \delta^{-1}+\frac{1}{2} \right)^{s-k}+ \delta^{-1}+1.$$
\end{lem}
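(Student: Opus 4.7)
The plan is to follow Gnewuch's argument for Theorem~\ref{thm:gnewuchalt} in \cite{Gne08} and extract exactly the intermediate estimate that appears before his final simplifying step. Since the lemma is stated as an intermediate result of that proof, the main task is to reproduce the combinatorial count without subsequently overestimating it.

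First, I would recall Gnewuch's explicit construction of a $\delta$-bracketing cover for the indicator functions $\mathbb{1}_{[0,x)}$ with $x \in [0,1]^s$. The construction proceeds by partitioning each coordinate axis and taking axis-parallel products, and then refining the grid so that, in each resulting bracket, only a certain number of coordinates use the finest mesh while the remaining coordinates live on a coarser scale. This yields a natural stratification of the bracketing cover indexed by an integer $k \in \{0,1,\ldots,s-1\}$ that records how many coordinates are coarse.

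Second, I would count the brackets in each stratum. For fixed $k$, the distinguished $k+1$ coordinates are chosen in $\binom{s}{k+1}$ ways; the factor $2^{s-k-2}$ arises from binary choices encoding the endpoints of the brackets; the ratio $s^s/(s-k)!$ reflects the admissible partition sizes in the refined coordinates; and $\bigl(\delta^{-1}+\tfrac{1}{2}\bigr)^{s-k}$ bounds the number of grid positions along those refined axes. The extra term $\delta^{-1}+1$ is added separately to absorb the degenerate one-dimensional contribution, which does not fit the pattern of the general strata. Summing over $k=0,\ldots,s-2$ and adding this boundary term produces exactly the right-hand side of the lemma.

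The conceptual point to emphasize is that Gnewuch, in the course of proving Theorem~\ref{thm:gnewuchalt}, estimated each summand uniformly in $k$ in order to collapse this sum into the single clean expression $2^{s-1}(s^s/s!)(\delta^{-1}+1)^s$; by stopping one step earlier, the sharper sum form survives and is precisely what we need for the improvement in Proposition~\ref{prop:improvement_gnewuch}. The main obstacle is combinatorial bookkeeping: the binomial, factorial, and power-of-two factors must be aligned carefully with Gnewuch's indexing conventions, and one must track how the mesh sizes available for the refined coordinates depend on $k$. Once the indexing is set, the bound is an immediate consequence of the construction, and no new analytic ideas are required beyond those already present in \cite{Gne08}.
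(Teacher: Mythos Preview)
The paper does not supply its own proof of this lemma; it explicitly presents the inequality as ``an intermediate result of Gnewuch which he derived during the proof of Theorem~\ref{thm:gnewuchalt}'' and simply cites \cite{Gne08}. Your plan---to rerun Gnewuch's bracketing-cover construction and stop before his final simplifying overestimate---is therefore precisely the approach the paper relies on, and there is nothing further to compare.

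One caution: your sketch assigns heuristic meanings to each factor (the $\binom{s}{k+1}$, the $2^{s-k-2}$, the $s^s/(s-k)!$, the $(\delta^{-1}+\tfrac12)^{s-k}$), but the bookkeeping as written is slightly inconsistent---you describe $k+1$ ``distinguished'' coarse coordinates and simultaneously $s-k$ ``refined'' coordinates, which together exceed $s$. When you actually carry out the argument you will need to align your indexing with Gnewuch's conventions in \cite{Gne08}, since the factors arise from his specific recursive decomposition rather than from the informal picture you give. This is exactly the ``combinatorial bookkeeping'' obstacle you already flag, so just be aware that the interpretations in your second paragraph are placeholders rather than the literal combinatorics.
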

Indeed, we prove here the following stronger version of Gnewuch's inequality. Note that
$$2^{s-2} \left( \frac{s^s}{s!}\right) \left( \delta^{-1}+1\right)^s+\frac{1}{2}\left( \delta^{-1}+1\right) < 2^{s-1} \left( \frac{s^s}{s!}\right) \left( \delta^{-1}+1\right)^s$$
for all $s \geq 2$.
\begin{prop}[Upper bound for covering numbers] \label{prop:improvement_gnewuch} Let $s \in \mathbb{N}$ and $0 < \delta \leq 1$. Then
	$$N_{[\ ]}(s,\delta) \leq 2^{s-2} \left( \frac{s^s}{s!}\right) \left( \delta^{-1}+1\right)^s+\frac{1}{2}\left( \delta^{-1}+1\right).$$
\end{prop}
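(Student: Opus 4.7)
Write $\alpha := \delta^{-1}+1$ and $\beta := \delta^{-1}+\tfrac{1}{2}$, so $\alpha = \beta + \tfrac{1}{2}$. Note first that the right-hand side of Lemma~\ref{lemma_gnewuch} is not always smaller than the bound claimed in the proposition---for instance, at $s = 2$, $\delta = 1$ the lemma yields $11$ while the proposition claims $\le 9$---so a pure algebraic rearrangement of Lemma~\ref{lemma_gnewuch} cannot suffice. Instead, the plan is to revisit Gnewuch's combinatorial bracketing-cover construction underlying Lemma~\ref{lemma_gnewuch} (from \cite{Gne08}), sharpen the count at a single step, and re-sum.

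Gnewuch's construction produces a bracketing cover of the family $\{\mathbf{1}_{[0,x)} : x \in [0,1]^s\}$ via successive grid refinements: at level $k$ one adds at most $\binom{s}{k+1} 2^{s-k-2}(s^s/(s-k)!)\beta^{s-k}$ new brackets, and summing over $k$ together with a $+\alpha$ boundary correction gives Lemma~\ref{lemma_gnewuch}. The plan has three ingredients: (i) reorganize the construction so that brackets at different levels sharing common endpoints are identified rather than double-counted; (ii) re-sum using the binomial identity
$$\alpha^s = (\beta+\tfrac{1}{2})^s = \sum_{j=0}^s \binom{s}{j}\beta^j 2^{-(s-j)}$$
to recognize the combined contribution as
$$2^{s-2}\frac{s^s}{s!}\alpha^s = \sum_{j=0}^s \frac{s^s}{j!(s-j)!} 2^{j-2}\beta^j;$$
(iii) collect the remaining boundary brackets (essentially one-dimensional slices) and bound them by $\tfrac{1}{2}\alpha$. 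The resulting count is precisely the right-hand side of the proposition, which halves the factor $2^{s-1}$ of Theorem~\ref{thm:gnewuchalt} at the cost of the additive $\tfrac{1}{2}\alpha$ term.

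The main obstacle is step (i): identifying precisely which brackets across levels can be merged without losing coverage, and verifying that the reorganized family is still a valid $\delta$-bracketing cover for every indicator $\mathbf{1}_{[0,x)}$. This requires a careful geometric/combinatorial re-examination of the recursion in Gnewuch's proof; once this identification is in place, steps (ii) and (iii) reduce to the binomial manipulations sketched above, together with the elementary observation (stated in the excerpt) that the new bound is strictly smaller than $2^{s-1}(s^s/s!)\alpha^s$ for $s \ge 2$, and coincides with the classical one-dimensional count for $s=1$.
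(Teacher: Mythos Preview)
Your opening diagnosis is correct and in fact exposes a flaw in the paper's own argument. The paper proceeds purely algebraically from Lemma~\ref{lemma_gnewuch}: it replaces $\binom{s}{k+1}\frac{1}{(s-k)!}$ by $\binom{s}{k}\frac{1}{s!}$ term by term, extends the resulting sum to $k=0,\dots,s$, collapses it via the binomial theorem to $2^{s-2}\frac{s^s}{s!}(\delta^{-1}+1)^s$, and checks that the two added terms together with $\delta^{-1}+1$ contribute at most $\frac12(\delta^{-1}+1)$. But the termwise replacement is false already at $s=2$, $k=0$ (it asserts $\binom{2}{1}/2!\le\binom{2}{0}/2!$, i.e.\ $1\le\tfrac12$), and your example $s=2$, $\delta=1$ (lemma bound $11$, proposition bound $9$) confirms that no chain of inequalities starting from Lemma~\ref{lemma_gnewuch} alone can end at the proposition. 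So the paper's route is precisely the one you ruled out, and your reason for ruling it out is valid.

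That said, your own proposal is still only a plan, not a proof. Your step~(ii) is exactly the paper's binomial re-summing, so everything hinges on step~(i). You describe it as ``identifying precisely which brackets across levels can be merged without losing coverage'' and concede it is ``the main obstacle,'' but you give no construction, no verification that the merged family remains a $\delta$-bracketing cover, and no count showing its cardinality is at most $\sum_{j} \frac{s^s}{j!(s-j)!}\,2^{\,j-2}\beta^{\,j}$. Until that step is actually carried out, the proposal does not establish the proposition either.
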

\begin{proof} We prove our claim by induction on $s$. Let $n:=\lceil \delta^{-1} \rceil$. For $s=1$ we have 
	$$N_{[\ ]}(s,\delta) \leq n \leq \delta^{-1}+1.$$ 
  So let$s \geq 2$. With Lemma~\ref{lemma_gnewuch} we have
	\begin{align*}
	N_{[\ ]}(s,\delta) &\leq \sum\limits_{k=0}^{s-2} {s \choose k+1} 2^{s-k-2} \frac{s^s}{(s-k)!} \left( \delta^{-1}+\frac{1}{2} \right)^{s-k}+ \delta^{-1}+1 \\
	&\leq \sum\limits_{k=0}^{s-2} {s \choose k} 2^{s-k-2} \frac{s^s}{s!} \left( \delta^{-1}+\frac{1}{2} \right)^{s-k}+ \delta^{-1}+1
	\end{align*}
	For the right hand side we get
	\begin{eqnarray*}
		&& \sum\limits_{k=0}^{s-2} {s \choose k} 2^{s-k-2} \frac{s^s}{s!} \left( \delta^{-1}+\frac{1}{2} \right)^{s-k}+ \delta^{-1}+1 \\
		&=& \sum\limits_{k=0}^{s} {s \choose k} 2^{s-k-2} \frac{s^s}{s!} \left( \delta^{-1}+\frac{1}{2} \right)^{s-k} -\sum\limits_{k=s-1}^{s} {s \choose k} 2^{s-k-2} \frac{s^s}{s!} \left( \delta^{-1}+\frac{1}{2} \right)^{s-k} \\
		&& \quad+ \delta^{-1}+1 
	\end{eqnarray*}
	Finally
	$$-\sum\limits_{k=s-1}^{s} {s \choose k} 2^{s-k-2} \frac{s^s}{s!} \left( \delta^{-1}+\frac{1}{2} \right)^{s-k}+ \delta^{-1}+1 \leq \frac{1}{2}\left( \delta^{-1}+1 \right)$$
	and
	$$\sum\limits_{k=0}^{s} {s \choose k} 2^{s-k-2} \frac{s^s}{s!} \left( \delta^{-1}+\frac{1}{2} \right)^{s-k} = 2^{s-2}\frac{s^s}{s!}\left( \delta^{-1}+1 \right)^{s}$$
	imply
	$$N_{[\ ]}(s,\delta) \leq 2^{s-2} \left( \frac{s^s}{s!}\right) \left( \delta^{-1}+1\right)^s+\frac{1}{2}\left( \delta^{-1}+1\right).$$
\end{proof}

\begin{proof}[Proof of Theorem~\ref{thm:main_thm}] We closely follow the proof of \cite{Ais11}, Theorem~1, and amend the arguments therein in order to take into account our improved version of Gwenuch's inequality. For $s=1$, the points of distance $1/n$ satisfy the inequality and for $s=2$, the Hammersley sequence with base $2$ does the job. Therefore let $s \geq 3$. We may without loss of generality assume that $N > 81s$ because the claim trivially follows otherwise. For a clear presentation, we subdivide our proof into 5 steps. Since steps 1,2 and 5 are essentially the same as in \cite{Ais11}, we will not go into details here but still present them for the sake of completeness and for introducing notation. On the other hand steps 3 and 4 include some additional aspects in comparison to \cite{Ais11}.\\[12pt]
\textit{Step 1:} Define subsets $A_k$ and bound their cardinality by Proposition~\ref{prop:improvement_gnewuch}.\\
Let $$K:= \left\lceil \tfrac{\log_2(N)-\log_2(s)}{2} \right\rceil.$$ Then $K \geq 3$ and 
$$2^{-K} \in \left[ \frac{\sqrt{s}}{2\sqrt{N}}, \frac{\sqrt{s}}{\sqrt{N}} \right].$$
By Proposition~\ref{prop:improvement_gnewuch}, there exists a $2^{-k}$-cover of $[0,1]^s$ for $1 \leq k \leq K-1$, denoted by $\Gamma_k$, such that Stirling's formula yields
$$|\Gamma_k| \leq 2^{s-1} \left( \frac{s^s}{s!} \right) (2^k+1)^s + (2^k+1) \leq \frac{1}{\sqrt{2\pi s}} 2^{s-1} \exp(s)(2^k+1)^s + (2^k+1),$$
because $\mathcal{N}(s,d) \leq 2 N_{[\ ]}(s,d)$. Analogously, there exists a $2^{-K}$-bracketing cover $\Delta_K$ with 
$$|\Delta_K| \leq  \frac{1}{\sqrt{2\pi s}} 2^{s-2}\exp(s)(2^K+1)^s + (2^K+1).$$
Moreover we set
$$\Gamma_K := \left\{ v \in [0,1]^s \ | \ (v,w) \in \Delta_K \ \textrm{for some} \ w \right\}.$$
Fix $x \in [0,1]^s$ arbitrarily. We want to canonically define two sequences $v_k,w_k$ with $v_k,w_k \in \Gamma_k \cup \left\{ 0 \right\}$ such that
$$0 \leq v_1 \leq v_2 \leq \ldots \leq v_{K-1} \leq v_K \leq x \leq w_K$$
holds. First we choose $v_K,w_K = (v_K(x),w_K(x))$ with $v_K \leq x \leq w_K$ and $\lambda_s[v_K,w_K] \leq 2^{-K}.$ For every $k, \ 2 \leq k \leq K$ and $\gamma \in \Gamma_{k}$, there exist $v_{k-1} = v_{k-1}(\gamma)$ and $w_{k-1} = w_{k-1}(\gamma)$ with $v_{k-1},w_{k-1} \in \Gamma_{k-1} \cup \left\{ 0 \right\}, v_k \leq \gamma \leq w_k$ and $\lambda_s[v_k,w_k] \leq 2^{-k+1}.$ Recursively we set $p_K(x) =: v_K(x)$ and $p_k: = v_{k}(v_{k+1}(x))$ for $1 \leq k \leq K-1$. Moreover we define $p_0 = 0$. Finally, for $x,y \in [0,1]$ let
$$\overline{[x,y]} := \begin{cases} [0,y] \setminus [0,x] & \textrm{if } x \neq 0\\ [0,y] & \textrm{if } x=0, y \neq 0\\ 0 & \textrm{if } x=y=0 \\\end{cases}.$$
For $0 \leq k \leq K-1$, the sets $[p_k(x),p_{k+1}(x)]$ are bounded by
$$\lambda_s \overline{[p_k(x),p_{k+1}(x)]} \leq 2^{-k}$$
and $[p_K(x),w_K(x)]$ by
$$\lambda_s \overline{[p_K(x),w_K(x)]}\leq 2^{-K}.$$
We define $A_k$ as the set of all sets of the form $[p_k(x),p_{k+1}(x)]$ for $0\leq k \leq K-1$ and $A_K$ as the set of all sets of the form $[p_K(x),w_K(x)]$. It was proven in \cite{Ais11} that $\lambda_s(A_k) \leq 2^{-k}$ for all $0 \leq k \leq K$. Moreover, every $p_{k+1} \in \Gamma_{k+1}$ is contained in some $A_k$ and hence $|A_k| \leq |\Gamma_{k+1}|$. Therefore we have 
$$|A_k| \leq \frac{1}{\sqrt{2\pi s}} 2^{s-1} \exp(s)(2^{k+1}+1)^s + (2^{k+1}+1)$$
and
$$|A_K| \leq \frac{1}{\sqrt{2\pi s}} 2^{s-2} \exp(s)(2^{K}+1)^s + \frac{1}{2}(2^{K}+1).$$
\textit{Step 2:} Calculate lower bound for expected value of indicator functions.\\
Let $X_1,\ldots,X_n$ be a sequence of i.i.d random variables defined on some probability space $(\Omega,\mathcal{A},\mathbb{P})$ having uniform distribution on $[0,1]^s$. For $I \in A_k$ set $Z_i:= \mathbb{1}_I(X_i) - \lambda_s(I).$ In \cite{Ais11}, it is shown by using Bernstein's and Hoeffding's inequality that for arbitrary $c> 0$ 
\begin{align*}
\mathbb{P} \left( \left| \sum_{i=1}^N Z_i \right|  > c \sqrt{sN} \right) \leq \begin{cases} 2\exp \left( - \frac{c^2s}{2^{-k+1}(1-2^{-k})+4c^22^{-K}/3}\right) & \textrm{for} \ 2 \leq k \leq K \\ 2\exp(-2c^2s) & \textrm{for} \ k=0,1 \end{cases}
\end{align*}
\textit{Step 3:} Show that 
$$\mathbb{P}\left( \bigcup\limits_{k=0}^K \ \bigcup\limits_{I\in A_k} \left( \left| \sum\limits_{i=1}^N \mathbb{1}_I(X_i)-N\lambda_s(I) \right|>c_k\sqrt{sN} \right) \right)<1,$$
where the coefficients $c_k$ will be chosen in the following.\\
Let 
$$B_k:=\bigcup\limits_{I\in A_k} \left( \left| \sum\limits_{i=1}^N \mathds{1}_I(X_i)-N\lambda_s(I) \right|>c_k\sqrt{sN} \right).$$
For $k=0$ we get
\begin{align*}
\mathbb{P}(B_0) & \leq 2\exp(-2c_0^2s) |A_0|\\
& \leq 2\exp(-2c_0^2s) \left( \frac{1}{\sqrt{2\pi s}}2^{s-1}\exp(s)3^s + 3 \right)\\
& \stackrel{s \geq 3}{\leq} \exp(-2c_0^2s) \exp(s) 6^s \left( \frac{1}{\sqrt{6 \pi}} + 6 \exp(-3)6^{-3} \right)\\
& \leq \frac{1}{4} \exp(-2c_os^2)\exp(s)6^s.
\end{align*}
Thus $\mathbb{P}(B_0) \leq 1/4$ holds after choosing $$c_0 = \sqrt{\tfrac{\log(6)+1}{2}} \leq 1.19.$$
Analogously, we choose $$c_1 = \sqrt{\tfrac{\log(10)+1}{2}} \leq 1.29$$ and get $\mathbb{P}(B_1) \leq 1/4$. Now let $2 \leq k \leq K-1$. Then
\begin{align*}
2^k\mathbb{P}(B_k) \leq & \underbrace{\left( \frac{2^{s-1}}{\sqrt{2\pi s}} \exp(s) (2^{k+1}+1)^s + (2^{k+1}+1) \right) 2^{k+1}}_{=:G} \cdot \\
 & \quad \exp\left(-\frac{c_k^2s}{2^{-k+1}(1-2^{-k}) + \tfrac{4c_k}{3}2^{-K}} \right)
\end{align*}
At first we bound $G$ by
\begin{align*}
	G & \stackrel{s \geq 3}{\leq} \left( \frac{1}{2\sqrt{6 \pi}} + 2^{-3}(2^{k+1}+1)^{-2}\exp(-3) \right) 2^{s+1+k} (2^{k+1}+1)^s\exp(s)\\
	& \leq \frac{1}{8}  2^{s+1+k} (2^{k+1}+1)^s\exp(s) \\
	& \leq \frac{1}{8} 2^{k+1} \exp(s(\log(2)+1+\log(2^{k+1}+1)))\\
	& \stackrel{s \geq 3}{\leq} \exp\left(s\underbrace{\left(\frac{4}{3}(1+k)\log(2)+1+\log(2^{-k-1}+1)\right)}_{:=a_k}\right).
\end{align*}
Finally we define $c_k$ as the positive solution of the equation 
$$c_k = \sqrt{a_k} \cdot \sqrt{2^{-k+1}(1-2^{k})+\tfrac{4}{3}c_k2^{-K}},$$ which yields $|c_k| \leq 1.58$ and
$$2^k\mathbb{P}(B_k) \leq \exp(s\cdot a_k) \exp\left(-\frac{c_k^2s}{2^{-k+1}(1-2^{-k}) + \tfrac{4c_k}{3}2^{-K}} \right) \leq 1$$
and thus $\mathbb{P}(B_k) \leq 2^{-k}$. For $k=K$ the set $A_K$ contains at most
$$|A_K| \leq |\Delta_K| \leq \frac{1}{\sqrt{2\pi s}} 2^{s-2}\exp(s)(2^K+1)^s + (2^K+1)$$
elements. Similarly to the last case we obtain
\begin{align*}
	2^K\mathbb{P}(B_K) \leq & \exp\left( s \left( \underbrace{\frac{4}{3}K\log(2)+1+\log(1+2^{-K})}_{:=a_K} \right) \right) \cdot\\
	& \quad \exp\left(-\frac{c_K^2s}{2^{-K+1}(1-2^{-K}) + \tfrac{4c_K}{3}2^{-K}} \right).
\end{align*}
Defining $c_K$ via the equation
$$c_K = \sqrt{a_K} \cdot \sqrt{2^{-K+1}(1-2^{-K})+\frac{4}{3}c_K2^{-K}}$$
we arrive at $|c_K| \leq 1.33$ and $\mathbb{P}(B_K) \leq 2^{-K}.$ This completes step 3.\\[12pt]
\textit{Step 4:} Show that 
\begin{align} \label{inequ:ck}
\sum_{k=0}^K c_k < 8.
\end{align}
The fact that the choice of the $c_k$ depends on $K$, will be reflected by the notation $c_{k,K}$ in this step. For $K \leq 31$, the desired inequality can be checked by computer calculation. In this range, the maximal value is achieved for $K = 31$ and $\sum_{k=0}^{31} c_{k,31} \leq 7.99789995$. Hence let $K \geq 32$. Since the $c_{k,K}$ are monotonically decreasing for increasing $K$, we have
$$\sum_{k=0}^{31} c_{k,K} \leq 7.99789995$$
and $c_{K,K} \leq c_{32,32} \leq 5 \cdot 10^{-9}$ for $K\geq 32$. Solving the equation that defines $c_{k,K}$, we find 
$$c_{k,K} \leq 0.2480726 \cdot k 2^{-k/2}$$
for $k,K \geq 32$, $k\leq K-1$ and thus end up with the desired bound.\\[12pt]
\textit{Step 5:} Derive inequality~\eqref{inequ:main} \\
According to step 3 we may choose a realization $X_1(\omega),\ldots,X_N(\omega)$ with
$$\omega \notin \bigcup_{k=0}^K B_k$$
and set $z_n := X_n(\omega)$ for $1 \leq n \leq N$. In \cite{Ais11}, it is proven that
$$N\lambda_s([0,x]) - \left( \sum_{k=0}^{K-1} c_k + 1 \right) \sqrt{sN} \leq \sum_{n=1}^N \mathbb{1}_{[0,x)}(z_n) \leq N\lambda_s([0,x]) + \left( \sum_{k=0}^{K-1} c_k + 1 \right)\sqrt{sN}$$
holds for arbitrary $x \in [0,1]^s$. Thus~\eqref{inequ:main} follows from \eqref{inequ:ck}.
 \end{proof} 


\paragraph{Acknowledgments.} The first-named author thanks Markus Weimar for supervising his master thesis which this paper builds on and R\"udiger Verf\"urth for his constant support. The second-named author did parts of the work on this paper during a stay at the Fields Institute whom he would like to thank for hospitality.

Hendrik Pasing\\
\textsc{Hochschule Ruhr West, Duisburger Str. 100, D-45479 M\"ulheim an der Ruhr}\\
\textit{E-mail address:} \texttt{hendrik.pasing@hs-ruhrwest.de}\\[12pt]
Christian Wei\ss\\
\textsc{Hochschule Ruhr West, Duisburger Str. 100, D-45479 M\"ulheim an der Ruhr}\\
\textit{E-mail address:} \texttt{christian.weiss@hs-ruhrwest.de}


\begin{thebibliography}{xxx}
	\bibitem[Ais11]{Ais11} Aistleitner, C.: ``Covering numbers, dyadic chaining and discrepancy'', Journal of Complexity, 27, 531--540 (2011).
	\bibitem[BFW14]{BFW14} Bettels, C., Fabrega, J., Wei\ss, C.: ``Anwendung von Least Squares Monte Carlo im Solvency II Kontext -- Teil 1'', Der Aktuar, 2014 (2), 151--155 (2014).
	\bibitem[Doe11]{Doe11} Doerr, B.: ``Analyzing randomized search heuristics: tools from propability theory'', in: Doerr, B., Auger, A. (eds), ``Theory of Randomized Search Heuristics'', World Scientific Publishing, 1--20 (2011).
	\bibitem[Gne08]{Gne08} Gnewuch, M.: ``Bracketing numbers for axis-parallel boxes and applications to geometric discrepancy'', Journal of Complexity, (24), 154--172 (2008).
	\bibitem[HNWW01]{HNWW01} Heinrich, S.,  Novak, E., Wasilkowski, G. W. and Wo´zniakowski, H. ``The inverse of the
	star-discrepancy depends linearly on the dimension'', Acta Arith., 96(3), 279-?302 (2001).
	\bibitem[KNK18]{KNK18} Krah, A.-S., Nikoli\v{c}, Z., Korn R.: ``A Least-Squares Monte Carlo Framework in Proxy Modeling of Life Insurance Companies'', Risks 6(2), (2018).
	\bibitem[Nie92]{Nie92} Niederreiter, H.: ``Random Number Generation and Quasi-Monte Carlo Methods'', Number 63 in CBMS-NSF Series in Applied Mathematics, SIAM, Philadelphia (1992).
	\bibitem[Slo09]{Slo09} Sloan I., ``How high is high-dimensional?'', in Novak, E., Sloan, I., Traub, J., Wozniakowski, H. (ed.), ''Essays on the Complexity of Continuous Problems'', EMS, Friedrich Schiller Univ Jena, Jena, pp. 73 - 87 (2009).

\end{thebibliography}
\end{document}